\newtheorem{theorem}{Theorem}
\newtheorem{corollary}[theorem]{Corollary}
\newtheorem{remark}[theorem]{Remark}
\newtheorem{lemma}[theorem]{Lemma}
\begin{document}
\title{On a minimisation problem related to the solenoidal uncertainty}

\author{Yi C. Huang} 
\address{Yunnan Key Laboratory of Modern Analytical Mathematics and Applications, Yunnan Normal University, Kunming 650500, People's Republic of China}
\address{School of Mathematical Sciences, Nanjing Normal University, Nanjing 210023, People's Republic of China}
\email{Yi.Huang.Analysis@gmail.com}
\urladdr{https://orcid.org/0000-0002-1297-7674}
\author{Tohru Ozawa} 
\address{Department of Applied Physics, Waseda University, Tokyo 169-8555, Japan}
\email{txozawa@waseda.jp}
\author{Xinhang Tong}
\address{School of Mathematical Sciences, Nanjing Normal University, Nanjing 210023, People's Republic of China}
\email{letterwoodtxh@gmail.com}

\date{\today} 

\subjclass[2020]{Primary 26D10. Secondary 26B12, 35A23.}  
\keywords{Solenoidal uncertainty principle, weighted Sobolev spaces, confluent hypergeometric functions, weighted Hardy-Rellich inequalities, expanding square argument.}

\thanks{Research of YCH is partially supported by the National NSF grant of China (no. 11801274), 
the JSPS Invitational Fellowship for Research in Japan (no. S24040),
and the Open Projects from Yunnan Normal University (no. YNNUMA2403) and Soochow University (no. SDGC2418).
Research of TO is partially supported by JSPS Grant-in-Aid for Scientific Research (S) $\sharp$JP24H00024.
YCH would like to thank Nurgissa Yessirkegenov for pointing to Maz'ya's problem on solenoidal uncertainty
and his student Jian-Yang Zhang for helpful assistance to XT in preparing this manuscript.}

\maketitle
\begin{abstract}
We study Hamamoto's expanding square argument towards a 1-D minimisation problem related to the sharp solenoidal uncertainty principle.
Working in the right function space, we recast the involved interpolation type inequality into an exact equality,
where the vanishing of the remainder term characterises the extremisers via the confluent hypergeometric functions. 
In the process we also remove some unnecessary constraints on the prescribed parameters.
\end{abstract}

\section{Introduction}
In \cite{Mazya2018}, V. Maz'ya proposed an \textbf{Open Problem} for the sharp dimensional constant in the following Heisenberg-Pauli-Weyl uncertainty principle inequality 
$$
\int_{\mathbb{R}^N} |\nabla \mathbf{u}|^2 \, dx \, \int_{\mathbb{R}^N} |\mathbf{u}|^2 |x|^2 \, dx \geq C_N \left( \int_{\mathbb{R}^N} |\mathbf{u}|^2 \, dx \right)^2
$$
for solenoidal (namely, divergence-free) fields $\mathbf{u}: \mathbb{R}^N\rightarrow \mathbb{R}^N$. 
It turns out 
$$C_N=\begin{cases}
\frac14(N+2)^2, \quad &N\in\{1,2\},\\
\frac14 \left(\sqrt{(N-2)^2+8}+2\right)^2, \quad &N\geq3,
\end{cases}$$
see Hamamoto \cite{HAMAMOTO2023202} for $N\geq3$ and Cazacu-Flynn-Lam \cite{CFL22} for $N\in\{1,2\}$. 
In particular, by poloidal-toroidal and spherical-harmonics decompositions, Hamamoto reduced the question to the following elegant 1-D minimisation problem:
\begin{equation}\label{eq1}
 \inf\limits_{f \not\equiv 0} \frac{\left(\int_{0}^{+\infty} (f'')^2 x^{\mu+1} dx\right) \left( \int_{0}^{+\infty} \left( x^2 (f')^2 - \varepsilon f^2 \right) x^{\mu-1} dx\right)}{\left( \int_{0}^{+\infty} (f')^2 x^{\mu} dx \right)^2}.
\end{equation}
Here $\varepsilon$ and $\mu$ are reasonable constants coming from dimensional reduction.

Let us now introduce the natural weighted Sobolev space in which we shall solve the minimisation problem \eqref{eq1}
(clearly, one should also assume $f'\not\equiv 0$). 
For a twice differentiable real-valued function $f$ on the half-line \( \mathbb{R}_+=(0,+\infty) \) and \(\mu\in\mathbb{R}\), set
$$
\|f\| := \left( \int_{0}^{+\infty} \left(x^{\mu+1}(f'')^2 +(x^{\mu+1}+x^{\mu-1})(f')^2+x^{\mu-1}f^{2}\right) dx \right)^{1/2}.
$$
Let \(\bar{\mathcal{E}}_\mu(\mathbb{R}_+)\) be the Hilbert space completion with respect to $\|\cdot\|$  of  \[ \mathcal{E}_\mu(\mathbb{R}_+):= \{ f \in C^{\infty}(\mathbb{R}_+): \, \|f\| < +\infty \}.\] 
By working in \(\bar{\mathcal{E}}_\mu(\mathbb{R}_+)\), we see immediately that \eqref{eq1} is \textit{de facto} an interpolation problem\footnotemark
\footnotetext{For a related problem studied using similar ideas, we refer to Hayashi-Ozawa \cite{HayOza17}.
We also refer to \cite{OZAWA2017998} for Schr\"odinger-Robertson uncertainty relations in an algebraic framework.}. 
In \cite{Hamamoto2024} Hamamoto used instead the larger function space induced by 
$$
\|f\|' := \left( \int_{0}^{+\infty} \left(x^{\mu+1}(f'')^2 +(x^{\mu+1}+x^{\mu})(f')^2+x^{\mu-1}f^{2}\right) dx \right)^{1/2},
$$
see \cite[Line 3, Page 655]{Hamamoto2024}.
However, a careful analysis shows that his expanding square argument\footnotemark
\footnotetext{Such a well-known technique was also used for example in \cite{COSTA2008311} and \cite{cazacu2023caffarelli}.} has a flaw since it relies essentially on the cancellation of the term $\int_{0}^{+\infty}(f')^2x^{\mu-1}\,dx$ (see \cite[$I_0{[f]}$, Page 657 and (2.2), Page 658]{Hamamoto2024}), which may diverge as far as the argument is based on the larger function space induced by $\|\cdot\|'$.

We need the following properties of the space $\bar{\mathcal{E}}_\mu(\mathbb{R}_+)$.

\begin{lemma}\label{lemma1}
For every \(f\in \bar{\mathcal{E}}_\mu(\mathbb{R}_+)\), it holds that 
$$
\lim_{x \rightarrow \{0,+\infty\}  } f'(x)x^{\frac{\mu+1}{2}}=\lim_{x \rightarrow \{0,+\infty\}  } f'(x)x^{\frac{\mu}{2}}= \lim_{x \rightarrow \{0,+\infty\} } f(x)x^{\frac{\mu}{2}}=0.
$$
In particular, we also have
$$\lim_{x \rightarrow \{0,+\infty\}} f'(x)f(x)x^{\mu}=0.$$
\end{lemma}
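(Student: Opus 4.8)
The plan is to establish the three independent limit statements (for $f'(x)x^{(\mu+1)/2}$, $f'(x)x^{\mu/2}$, and $f(x)x^{\mu/2}$) and then derive the product limit as a simple consequence. Since $\bar{\mathcal{E}}_\mu(\mathbb{R}_+)$ is by definition the completion of smooth functions under $\|\cdot\|$, I would first prove the limits for $f\in\mathcal{E}_\mu(\mathbb{R}_+)$ and then argue that they persist under the completion. The conceptual engine throughout is the fundamental theorem of calculus combined with the Cauchy--Schwarz inequality: the finiteness of the weighted $L^2$ norms appearing in $\|f\|$ controls the growth and decay of $f$ and $f'$ at the two endpoints.

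First I would treat the quantity $g(x):=f(x)^2 x^{\mu}$ by computing its derivative, $g'(x)=2f f' x^{\mu}+\mu f^2 x^{\mu-1}$, and integrating. The two pieces are controlled by $\|f\|$: the term $\int f^2 x^{\mu-1}\,dx$ is directly one of the summands in $\|f\|^2$, while $\int |f f'| x^{\mu}\,dx$ is handled by Cauchy--Schwarz against the weights, writing $|f f'|x^{\mu}=\bigl(|f|x^{(\mu-1)/2}\bigr)\bigl(|f'|x^{(\mu+1)/2}\bigr)$, so that its integral is bounded by $\bigl(\int f^2 x^{\mu-1}\,dx\bigr)^{1/2}\bigl(\int (f')^2 x^{\mu+1}\,dx\bigr)^{1/2}\le\|f\|^2$. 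Hence $g'\in L^1(\mathbb{R}_+)$, so $g$ has finite limits at both $0$ and $+\infty$; since $g\ge 0$ and $g=f^2 x^{\mu}$ is integrable against $x^{-1}$ near each endpoint (again from the $\int f^2 x^{\mu-1}\,dx$ term), those limits must be $0$. This yields $\lim f(x)x^{\mu/2}=0$. The analogous argument applied to $h(x):=f'(x)^2 x^{\mu+1}$ and to $\tilde h(x):=f'(x)^2 x^{\mu}$, differentiating and using the summands $\int (f')^2 x^{\mu\pm 1}\,dx$ and $\int (f'')^2 x^{\mu+1}\,dx$ controlled by the same Cauchy--Schwarz pairing, gives the two limits for $f'$.

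To pass from $\mathcal{E}_\mu(\mathbb{R}_+)$ to the completion $\bar{\mathcal{E}}_\mu(\mathbb{R}_+)$, I would observe that each of the three boundary functionals is, in an appropriate sense, continuous with respect to $\|\cdot\|$. Concretely, for two smooth functions $f,\tilde f$ the difference of the boundary quantities, say $f^2 x^\mu-\tilde f^2 x^\mu$ evaluated near an endpoint, can be bounded uniformly in terms of $\|f-\tilde f\|$ and $\|f\|+\|\tilde f\|$ via the same integral estimates, so the vanishing of the limits is preserved under $\|\cdot\|$-Cauchy sequences; a limiting element inherits the boundary behaviour of its approximants. The final ``in particular'' statement is then immediate: writing $|f'(x)f(x)x^{\mu}|=\bigl(|f'(x)|x^{(\mu+1)/2}\bigr)\bigl(|f(x)|x^{(\mu-1)/2}\bigr)$ and applying the already-established decay of the first factor together with the boundedness (indeed vanishing) of $|f(x)|x^{\mu/2}$ near the endpoints forces the product to $0$.

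The main obstacle I anticipate is the completion argument rather than the pointwise estimates: one must verify that the boundary limits are genuinely well-defined and continuous on the abstract completion, since elements of $\bar{\mathcal{E}}_\mu(\mathbb{R}_+)$ are a priori only equivalence classes of Cauchy sequences and need not be classically twice differentiable. The cleanest route is to show that the map $f\mapsto f'(x)x^{(\mu+1)/2}$ (and its companions), together with the ``$=0$ at endpoints'' property, extends continuously, which reduces to the uniform-in-$x$ control just described. Care is also needed with the weight $x^{\mu}$ when $\mu$ has no sign restriction, but since the three summands of $\|f\|^2$ supply both the $x^{\mu-1}$ and $x^{\mu+1}$ weights, the Cauchy--Schwarz pairings close regardless of the sign of $\mu$, which is precisely why working in $\bar{\mathcal{E}}_\mu(\mathbb{R}_+)$ rather than the larger space induced by $\|\cdot\|'$ is essential.
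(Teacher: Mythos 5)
Your proposal is correct and follows essentially the same route as the paper: show that each weighted square quantity ($f^2x^{\mu}$, $(f')^2x^{\mu}$, $(f')^2x^{\mu+1}$) has derivative in $L^1$ near the relevant endpoint using the summands of $\|f\|^2$, conclude that a limit exists, and rule out a positive limit because the quantity divided by $x$ is integrable; the paper uses $2ab\le a^2+b^2$ where you use Cauchy--Schwarz, proves only four of the six limits and deduces the rest by multiplying by $x^{\pm 1/2}$, and (unlike you) does not comment on passing to the completion. One small slip: for the final product limit your factorisation $|f'f|x^{\mu}=\bigl(|f'|x^{(\mu+1)/2}\bigr)\bigl(|f|x^{(\mu-1)/2}\bigr)$ does not control the endpoint $x\to 0$, since $|f|x^{(\mu-1)/2}=\bigl(|f|x^{\mu/2}\bigr)x^{-1/2}$ need not stay bounded there; use instead the symmetric split $\bigl(|f'|x^{\mu/2}\bigr)\bigl(|f|x^{\mu/2}\bigr)$, both factors of which you have already shown to vanish at both endpoints.
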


Our main results are stated as follows.

\begin{theorem}\label{theorem2}
Suppose $\mu$ and $\varepsilon$ are two reals with \( \varepsilon \le \frac{\mu^{2}}{4}\). Let \(b=\frac{\mu - \sqrt{\mu^2 - 4\varepsilon}}{2}\).
Then 
\begin{equation}\label{eq0}
\begin{aligned}
&\int_0^{+\infty} \big(x f'' +  x f' + \mu f' + (\mu-b) f\big)^2 x^{\mu-1} dx\\&\qquad= \int_{0}^{+\infty}\left(x^{2}(f')^{2}-\varepsilon f^{2}\right)x^{\mu-1}dx\\
&\qquad\qquad-\left(\sqrt{\mu^{2}-4\varepsilon}+1\right) \int_{0}^{+\infty}(f')^{2}x^{\mu}dx+\int_{0}^{+\infty}(f'')^{2}x^{\mu+1}dx
\end{aligned}
\end{equation}
and 
\begin{equation}\label{eq2}
\begin{aligned}
	\left(\int_{0}^{+\infty} (f'')^2 x^{\mu+1} dx\right)
	\left( \int_{0}^{+\infty} \left( x^2 (f')^2 - \varepsilon f^2 \right) x^{\mu-1} dx\right) \\ \ge \frac{1}{4}\left(\sqrt{\mu^{2}-4\varepsilon}+1\right)^{2}\left( \int_{0}^{+\infty} (f')^2 x^{\mu} dx \right)^2
\end{aligned}
\end{equation}
hold for all \(f\in\bar{\mathcal{E}}_\mu(\mathbb{R}_+) \).
\end{theorem}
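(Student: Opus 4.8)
The plan is to prove the exact identity \eqref{eq0} by a direct integration-by-parts computation, and then to deduce the product inequality \eqref{eq2} from it by a one-parameter dilation argument combined with AM--GM. For \eqref{eq0}, abbreviate the integrand of the left-hand side by writing $L[f] := xf'' + (x+\mu)f' + (\mu-b)f$, so that the left-hand side equals $\int_0^\infty (L[f])^2 x^{\mu-1}\,dx$. Since both sides of \eqref{eq0} are continuous quadratic forms in $f$ with respect to $\|\cdot\|$, it suffices by density to argue for $f\in\mathcal{E}_\mu(\mathbb{R}_+)$, where all manipulations are classical. Expanding the square produces three diagonal terms $x^2(f'')^2$, $(x+\mu)^2(f')^2$, $(\mu-b)^2 f^2$ and three cross terms $2x(x+\mu)f''f'$, $2x(\mu-b)f''f$, $2(x+\mu)(\mu-b)f'f$, each carrying the weight $x^{\mu-1}$. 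I would integrate each cross term by parts after writing $f''f'=\tfrac12((f')^2)'$, $f''f=(f'f)'-(f')^2$, and $f'f=\tfrac12(f^2)'$; by Lemma~\ref{lemma1} every resulting boundary term (a multiple of $x^{\mu+1}(f')^2$, $x^\mu(f')^2$, $x^\mu f'f$, or $x^\mu f^2$ evaluated at $0$ and $+\infty$) vanishes. Collecting the surviving integrands and matching coefficients against the right-hand side then finishes \eqref{eq0}; here one uses that, with $b=\tfrac{\mu-\sqrt{\mu^2-4\varepsilon}}{2}$, one has $\mu-b=\tfrac{\mu+\sqrt{\mu^2-4\varepsilon}}{2}$, hence $b(\mu-b)=\varepsilon$ and $2b-\mu=-\sqrt{\mu^2-4\varepsilon}$, which are exactly the relations forcing the $f^2 x^{\mu-1}$ and $(f')^2 x^{\mu}$ coefficients to come out as $-\varepsilon$ and $-(\sqrt{\mu^2-4\varepsilon}+1)$.

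The step I expect to be the main obstacle is keeping the integration by parts legitimate within $\bar{\mathcal{E}}_\mu(\mathbb{R}_+)$. The essential point is that the residual $x^{\mu-1}f'f$ term produced when integrating $2x(\mu-b)f''f$ by parts cancels exactly against the $x^{\mu-1}f'f$ piece of $2(x+\mu)(\mu-b)f'f$ (the coefficients are $\mp 2(\mu-b)\mu$). This cancellation is what lets me avoid integrating $x^{\mu-1}f'f$ by parts at all: doing so would generate the integral $\int_0^\infty x^{\mu-2}f^2\,dx$ together with a boundary term in $x^{\mu-1}f^2$, neither of which is controlled by $\|\cdot\|$ nor covered by Lemma~\ref{lemma1} (which only yields $f^2 x^\mu\to0$, not $f^2 x^{\mu-1}\to0$). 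In addition, the $(f')^2 x^{\mu-1}$ contributions arising from $(x+\mu)^2(f')^2$ and from integrating $2x(x+\mu)f''f'$ by parts cancel (coefficients $\pm\mu^2$), matching the absence of any such term on the right-hand side. Verifying that these cancellations are forced by the algebra, rather than imposed as hypotheses, is the delicate bookkeeping at the heart of \eqref{eq0}.

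Finally, for \eqref{eq2}, write $P=\int_0^\infty (f'')^2 x^{\mu+1}\,dx$, $Q=\int_0^\infty (x^2(f')^2-\varepsilon f^2)x^{\mu-1}\,dx$, $R=\int_0^\infty (f')^2 x^{\mu}\,dx$, and $\lambda=\sqrt{\mu^2-4\varepsilon}+1>0$. Since its left-hand side is the integral of a square, \eqref{eq0} yields $P+Q\ge\lambda R$. This single inequality is not yet scale invariant, so I would apply it to the dilates $f_\sigma(x):=f(\sigma x)$, $\sigma>0$, which again lie in $\bar{\mathcal{E}}_\mu(\mathbb{R}_+)$ and satisfy $P[f_\sigma]=\sigma^{2-\mu}P$, $Q[f_\sigma]=\sigma^{-\mu}Q$, $R[f_\sigma]=\sigma^{1-\mu}R$. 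Feeding these into $P[f_\sigma]+Q[f_\sigma]\ge\lambda R[f_\sigma]$ and clearing the common factor $\sigma^{-\mu}$ gives $\sigma^2 P+Q\ge\lambda\sigma R$, that is, $\sigma P+\sigma^{-1}Q\ge\lambda R$ for every $\sigma>0$. Minimising the left-hand side over $\sigma$ (the minimiser $\sigma=\sqrt{Q/P}$ is admissible when $P,Q>0$) and invoking AM--GM gives $2\sqrt{PQ}\ge\lambda R$, whence $4PQ\ge\lambda^2 R^2$, which is exactly \eqref{eq2}; the degenerate cases $P=0$ or $Q=0$ force $R=0$ upon letting $\sigma\to+\infty$ or $\sigma\to0$, so \eqref{eq2} holds trivially there.
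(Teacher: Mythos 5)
Your proof is correct and follows essentially the same route as the paper: both establish \eqref{eq0} by expanding the square and integrating by parts, with the boundary terms killed by Lemma \ref{lemma1} and with exactly the cancellations of the $x^{\mu-1}f'f$ and $x^{\mu-1}(f')^2$ contributions that you single out. The only (cosmetic) divergence is in passing to \eqref{eq2}: the paper keeps a free coefficient $\alpha$ on the $xf'$ term and uses nonnegativity of the resulting quadratic $g(\alpha)$, whereas you generate the same one-parameter family of inequalities by the dilation $f_\sigma(x)=f(\sigma x)$ and optimise via AM--GM --- equivalent, since evaluating the identity at $f_\sigma$ amounts to taking $\alpha=\sigma$ up to an overall scale factor.
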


Note that \eqref{eq2} was already established in \cite[Theorem 5.2]{HAMAMOTO2023202} by Laguerre polynomial expansion and then in \cite[Theorem 1]{Hamamoto2024}, though with a flaw as indicated above.
We shall prove \eqref{eq2} in the framework of equalities in which \eqref{eq0} is a special case.
These equalities allow us to characterise the extremisers in a clear-cut way.

\begin{corollary}\label{corollary3}
	Suppose $\mu\neq0$ and $\varepsilon$ are two reals with \( \varepsilon \le \frac{\mu^{2}}{4}\). Let \(b=\frac{\mu - \sqrt{\mu^2 - 4\varepsilon}}{2}\).
The equality in \eqref{eq2} holds if and only if for some \(C\in\mathbb{R} \) and \(\lambda>0\), 
		$$f(x) = Ce^{-\lambda x} \, {}_1F_1(b, \mu, \lambda x)$$
	in the case \(\mu>0\),  or 
		$$f(x) = C(\lambda x)^{1-\mu}e^{-\lambda x} \, {}_1F_1(b+1-\mu, 2-\mu, \lambda x)$$
	in the case \(\mu < 0\). Here, 
	$${}_1F_1(b, \mu, x):=1+\sum_{k=1}^{+\infty}\frac{b(b+1)\cdots (b+k-1)}{\mu(\mu+1)\cdots (\mu+k-1)}\frac{x^k}{k!}$$
	denotes Kummer's confluent hypergeometric function that solves Kummer's ODE
	$$
	xw''+(\mu-x)w'-bw=0.
	$$
\end{corollary}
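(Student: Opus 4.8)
Throughout write \(\sigma:=\sqrt{\mu^2-4\varepsilon}\ge0\) and abbreviate the three integrals as \(P[f]=\int_0^\infty(f'')^2x^{\mu+1}\,dx\), \(Q[f]=\int_0^\infty(x^2(f')^2-\varepsilon f^2)x^{\mu-1}\,dx\) and \(T[f]=\int_0^\infty(f')^2x^{\mu}\,dx\); let \(R[f]\ge0\) denote the left-hand side of \eqref{eq0}, so that \eqref{eq0} reads \(R[f]=P[f]+Q[f]-(\sigma+1)T[f]\). The plan is to extract the equality cases directly from this identity. Since the quotient in \eqref{eq1} is invariant under the dilation \(f(x)\mapsto f(\lambda x)\), I first record the scalings \(P[f(\lambda\cdot)]=\lambda^{2-\mu}P[f]\), \(Q[f(\lambda\cdot)]=\lambda^{-\mu}Q[f]\) and \(T[f(\lambda\cdot)]=\lambda^{1-\mu}T[f]\). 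Applying \(R\ge0\) to \(f(\lambda\cdot)\) and dividing by \(\lambda^{-\mu}\) gives \(\lambda^2P[f]+Q[f]\ge(\sigma+1)\lambda\,T[f]\) for all \(\lambda>0\); this quadratic in \(\lambda\) attains its minimum at \(\lambda_0=(\sigma+1)T[f]/(2P[f])>0\), and non-negativity of its discriminant is precisely \eqref{eq2}. Consequently equality in \eqref{eq2} is equivalent to the discriminant vanishing, i.e. to \(R[f(\lambda_0\cdot)]=0\).

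Because the integrand of \(R\) is the square \((xg''+(x+\mu)g'+(\mu-b)g)^2\) against the positive weight \(x^{\mu-1}\), the vanishing \(R[g]=0\) for \(g=f(\lambda_0\cdot)\) forces the linear ODE \(xg''+(x+\mu)g'+(\mu-b)g=0\) on \(\mathbb R_+\). The substitution \(g(x)=e^{-x}w(x)\) converts this into Kummer's equation \(xw''+(\mu-x)w'-bw=0\), whose two standard solutions are \({}_1F_1(b,\mu,x)\) and \(x^{1-\mu}\,{}_1F_1(b+1-\mu,2-\mu,x)\). Undoing the dilation reinstates the free parameter \(\lambda>0\) and, after inserting an overall constant \(C\), produces exactly the two candidate profiles in the statement, namely \(Ce^{-\lambda x}\,{}_1F_1(b,\mu,\lambda x)\) and \(C(\lambda x)^{1-\mu}e^{-\lambda x}\,{}_1F_1(b+1-\mu,2-\mu,\lambda x)\).

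The decisive step, and the part I expect to be the main obstacle, is to decide which solution actually lies in \(\bar{\mathcal E}_\mu(\mathbb R_+)\); this is where \(\mu\neq0\) and the case split are used. Near \(x=0\) one has \({}_1F_1(b,\mu,x)\to1\) while the second solution behaves like \(x^{1-\mu}\). Testing these profiles against the three weights defining \(\|\cdot\|\): the constant profile renders \(\int_0 x^{\mu-1}f^2\,dx\) finite only for \(\mu>0\), whereas the \(x^{1-\mu}\) profile renders the top-order terms \(\int_0 x^{\mu+1}(f'')^2\,dx\) and \(\int_0 x^{\mu-1}(f')^2\,dx\) finite only for \(\mu<0\). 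Thus for \(\mu>0\) only the first profile is admissible and for \(\mu<0\) only the second, which is the asserted dichotomy. At \(+\infty\) the Kummer factor cancels the exponential and leaves the power \(x^{b-\mu}=x^{-(\mu+\sigma)/2}\); the hypothesis \(\varepsilon\le\frac{\mu^2}{4}\) keeps \(b\) (hence \(\sigma\)) real and, for \(\sigma>0\), makes all three weighted integrals converge at infinity, the borderline \(\varepsilon=\frac{\mu^2}{4}\) requiring a separate limiting argument. Lemma \ref{lemma1} is invoked to ensure that the boundary contributions arising in these integrability checks, and in the integration by parts implicit in \eqref{eq0}, genuinely vanish, so that no extraneous solution is admitted.

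For the converse I verify that each listed function is a genuine extremiser: by construction it solves the above ODE, so \(R[f]=0\) and hence \(P[f]+Q[f]=(\sigma+1)T[f]\); substituting this into the already-established inequality \eqref{eq2} yields \(4P[f]Q[f]\ge(P[f]+Q[f])^2\), that is \((P[f]-Q[f])^2\le0\), so equality holds. The symmetries \(f\mapsto Cf\) and \(f\mapsto f(\lambda\cdot)\), under which both sides of \eqref{eq2} are homogeneous of the same degree, then sweep out the full two-parameter family. One technical point remains: a general element of the completion \(\bar{\mathcal E}_\mu(\mathbb R_+)\) need not be classically twice differentiable, so the implication \(R[g]=0\Rightarrow\) ODE should be read in the weak sense and upgraded to a classical solution by interior regularity on \(\mathbb R_+\), after which the asymptotic analysis above applies verbatim.
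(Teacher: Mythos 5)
Your proof is correct and follows essentially the same route as the paper: equality in \eqref{eq2} forces the non-negative quadratic built from the expanding-square identity to have a double root, whence the remainder square vanishes and yields a second-order linear ODE that the substitution $f(x)=e^{-\lambda x}w(\lambda x)$ turns into Kummer's equation, after which membership in $\bar{\mathcal{E}}_\mu(\mathbb{R}_+)$ near the origin selects $\varphi={}_1F_1(b,\mu,\cdot)$ for $\mu>0$ and $\psi=t^{1-\mu}{}_1F_1(b+1-\mu,2-\mu,\cdot)$ for $\mu<0$. The only packaging difference is that you regenerate the one-parameter family of quadratics from the single identity \eqref{eq0} by dilating $f$ and using the homogeneities of the three integrals, whereas the paper keeps the free parameter $\alpha$ inside the square in \eqref{eq8}; the two are equivalent (your $\lambda$ is the paper's $\alpha$ up to inversion and an overall factor $\lambda^{-\mu}$), and your explicit remarks on the converse direction, on weak-to-classical regularity, and on the borderline $\varepsilon=\mu^2/4$ address points the paper leaves implicit.
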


\section{Proofs of the main results}
\begin{proof}[Proof of Lemma \ref{lemma1}]
We start with the tedious arguments for completeness. Set \[G_{1}(x):=(f'(x))^{2}x^{\mu+1}, \quad G_{2}(x):=(f'(x))^{2}x^{\mu}, \quad G_{3}(x):=(f(x))^{2}x^{\mu}.\] 
	Simple reasoning shows that it suffices to check
	\begin{equation}\label{limits}
		\lim_{x \rightarrow +\infty } G_{1}(x)= \lim_{x \rightarrow +\infty } G_{3}(x)= \lim_{x \rightarrow 0 } G_{2}(x)=\lim_{x \rightarrow 0 } G_{3}(x)=0.
	\end{equation}
	
	(i) By using the elementary inequality \(2ab\le a^{2}+b^{2}\), we get
	\begin{equation}
		\begin{aligned}
			&\int_{1}^{+\infty}\left| \frac{dG_{1}(x)}{dx}\right| dx\\
			&\qquad=\int_{1}^{+\infty}\left| (\mu+1)x^{\mu}(f')^{2}+2x^{\mu+1}f''f'\right|dx\\
			&\qquad\le \int_{1}^{+\infty}\left|(\mu+1)x^{\mu}(f')^{2}+x^{\mu+1}(f'')^{2}+x^{\mu+1}(f')^{2}\right|dx\\
			&\qquad\le \int_{1}^{+\infty} \left(\left|\frac{\mu+1}{2} \right|\left(x^{\mu+1}(f')^{2}+x^{\mu-1}(f')^{2}\right)+x^{\mu+1}(f'')^{2}+x^{\mu+1}(f')^{2}\right)dx\\
			&\qquad\lesssim \left\| f \right\|^{2}<+\infty.
			\nonumber
		\end{aligned}
	\end{equation}
	This implies that
	$$
	G_{1}(+\infty)=G_{1}(1)+\int_{1}^{+\infty}\frac{dG_{1}(x)}{dx}dx
	$$
	is convergent and hence a nonnegative constant. If  \(G_{1}(+\infty)>0\), then \[+\infty=\int_{1}^{+\infty}G_{1}(x)dx\le \left\| f\right\|^{2}<+\infty.\] 
	From this contradiction we deduce \(G_{1}(+\infty)=0\).
	
	(ii) To verify the second limit in \eqref{limits}, we can use similar arguments:  
	\begin{equation}
		\begin{aligned}
			\int_{1}^{+\infty}\left| \frac{dG_{3}(x)}{dx}\right| dx&=\int_{1}^{+\infty}\left| \mu x^{\mu-1}f^{2}+2x^{\mu}f'f\right|dx\\&\le \int_{1}^{+\infty}\left(\left|\mu \right|x^{\mu-1}f^{2}+x^{\mu+1}(f')^{2}+x^{\mu-1}f^{2}\right)dx\\&\lesssim \left\| f \right\|^{2}<+\infty
			\nonumber
		\end{aligned}
	\end{equation}
and hence that	$$
	G_{3}(+\infty)=G_{3}(1)+\int_{1}^{+\infty}\frac{dG_{3}(x)}{dx}dx
	$$
	is convergent and hence a nonnegative constant. If \(G_{3}(+\infty)>0\), then 
	$$
	+\infty=\int_{1}^{+\infty}\frac{G_{3}(x)}{x}dx=\int_{1}^{+\infty}x^{\mu-1}f^{2}dx\le \left\| f \right\|^{2}<+\infty,
	$$
	which is a contradiction. Therefore, we deduce \(G_{3}(+\infty)=0\).
	
	(iii) Now we verify the limits in \eqref{limits} when $x\rightarrow0$. Note as in (ii) that
	\begin{equation}
		\begin{aligned}
			\int_{0}^{1}\left| \frac{dG_{3}(x)}{dx}\right| dx&=\int_{0}^{1}\left| \mu x^{\mu-1}f^{2}+2x^{\mu}ff'\right|dx\\&\le \int_{0}^{1}\left(\left|\mu\right| x^{\mu-1}f^{2}+x^{\mu+1}(f')^{2}+x^{\mu-1}f^{2}\right)dx\\&\lesssim \left\| f \right\|^{2}<+\infty
			\nonumber
		\end{aligned}
	\end{equation}
	and hence that
	$$
	G_{3}(0)=G_{3}(1)-\int_{0}^{1}\frac{dG_{3}(x)}{dx}dx
	$$
	is a nonnegative constant. If \(G_{3}(0)>0\), then as in (ii) we have \[+\infty= \int_{0}^{1}\frac{G_{3}(x)}{x}dx=\int_{0}^{1}x^{\mu-1}f^{2}dx\le \left\| f \right\|^{2}<+\infty,\] which is a contradiction. Thus we deduce \(G_{3}(0)=0\). Similarly,
	\begin{equation}
		\begin{aligned}
			\int_{0}^{1}\left| \frac{dG_{2}(x)}{dx}\right| dx&=\int_{0}^{1}\left| \mu x^{\mu-1}(f')^{2}+2x^{\mu}f''f'\right|dx\\&\le \int_{0}^{1}\left(\left|\mu \right|x^{\mu-1}(f')^{2}+x^{\mu+1}(f'')^{2}+x^{\mu-1}(f')^{2}\right)dx\\&\lesssim \left\| f \right\|^{2}<+\infty
			\nonumber
		\end{aligned}
	\end{equation}
	and hence that 
	$$
	G_{2}(0)=G_{2}(1)-\int_{0}^{1}\frac{dG_{2}(x)}{dx}dx
	$$
	is a nonnegative constant. If \(G_{2}(0)>0\), then \[+\infty= \int_{0}^{1}\frac{G_{2}(x)}{x}dx=\int_{0}^{1}x^{\mu-1}(f')^{2}dx\le \left\| f \right\|^{2}<+\infty,\] which is a contradiction. Thus we deduce \(G_{2}(0)=0\).
	
	Collecting all the justifications above then proves the lemma.
\end{proof}

Now, using the expanding square argument, we prove Theorem \ref{theorem2}.

\begin{proof}[Proof of Theorem \ref{theorem2}]
For every \(f\in\bar{\mathcal{E}}_\mu(\mathbb{R}_+) \), and any $\alpha, \beta, \gamma\in\mathbb{R}$, we have
\begin{equation}\label{eq6}
\begin{aligned}
\int_0^{+\infty} &\big(x f'' + \alpha x f' + \beta f' + \gamma f\big)^2 x^{\mu-1} dx 
\\&= \int_0^{+\infty} \big(f''\big)^2 x^{\mu+1} dx + \alpha^2 \int_0^{+\infty} \big(x f'\big)^2 x^{\mu-1} dx + \beta^2 \int_0^{+\infty} \big(f'\big)^2 x^{\mu-1}dx 
\\&\qquad+\gamma^2 \int_0^{+\infty} f^2 x^{\mu-1} dx +2\alpha\beta \int_0^{+\infty} \big(f'\big)^2 x^\mu dx\\
&\qquad+2\alpha \int_0^{+\infty} f'' f' x^{\mu+1} dx + 2\beta \int_0^{+\infty} f'' f' x^{\mu} dx + 2\gamma \int_0^{+\infty} f''f x^\mu dx
\\& \qquad+2\alpha\gamma \int_0^{+\infty} f' f x^\mu dx +2\beta \gamma \int_0^{+\infty} f' f x^{\mu-1} dx.
\end{aligned}
\end{equation}
Using Lemma \ref{lemma1}
and integration by parts for the first four cross-terms, we get\footnotemark\footnotetext{Here, the involved ``integration by parts" and ``differentiation" are valid even if $\mu\in\{-1, 0\}$.}  
\begin{align*}
2\alpha \int_0^{+\infty} f'' f' x^{\mu+1} dx&=\alpha\left[(f')^{2}x^{\mu+1}\right]_{0}^{+\infty}-\alpha(\mu+1)\int_{0}^{+\infty}(f')^{2}x^{\mu}dx\\&=-\alpha(\mu+1)\int_{0}^{+\infty}(f')^{2}x^{\mu}dx,
\end{align*}
\begin{align*}
2\beta \int_0^{+\infty} f'' f' x^{\mu} dx&=\beta\left[(f')^{2}x^{\mu}\right]_{0}^{+\infty}-\beta\mu\int_{0}^{+\infty}(f')^{2}x^{\mu-1}dx\\&=-\beta\mu\int_{0}^{+\infty}(f')^{2}x^{\mu-1}dx,
\end{align*}
\begin{align*}
2\gamma \int_0^{+\infty} f'' f x^{\mu} dx&=2\gamma\left[ f'fx^{\mu}\right]_{0}^{+\infty}-2\gamma\int_{0}^{+\infty}(f')^{2}x^{\mu}dx-2\gamma\mu\int_{0}^{+\infty}f'fx^{\mu-1}dx\\&=-2\gamma\int_{0}^{+\infty}(f')^{2}x^{\mu}dx-2\gamma\mu\int_{0}^{+\infty}f'fx^{\mu-1}dx,
\end{align*}
and
\begin{align*}
2\alpha\gamma \int_0^{+\infty} f' f x^\mu dx&=\alpha\gamma\left[ f^{2}x^{\mu}\right]_{0}^{+\infty}-\alpha\gamma\mu\int_{0}^{+\infty}f^{2}x^{\mu-1}dx\\&=-\alpha\gamma\mu\int_{0}^{+\infty}f^{2}x^{\mu-1}dx.
\end{align*}
Then \eqref{eq6} can be rewritten as 
\begin{align*}
\int_0^{+\infty} &\big(x f'' + \alpha x f' + \beta f' + \gamma f\big)^2 x^{\mu-1} dx\\
&=\int_0^{+\infty} \big(f''\big)^2 x^{\mu+1} dx + \alpha^2 \int_0^{+\infty} \big(x f'\big)^2 x^{\mu-1} dx + \beta^2 \int_0^{+\infty} \big(f'\big)^2 x^{\mu-1}dx 
\\&\qquad+\gamma^2 \int_0^{+\infty} f^2 x^{\mu-1} dx+2\alpha\beta \int_0^{+\infty} \big(f'\big)^2 x^\mu dx\\
&\qquad-\alpha(\mu+1)\int_{0}^{+\infty}(f')^{2}x^{\mu}dx-\beta\mu\int_{0}^{+\infty}(f')^{2}x^{\mu-1}dx -2\gamma\int_{0}^{+\infty}(f')^{2}x^{\mu}dx\\
&\qquad-2\gamma\mu\int_{0}^{+\infty}f'fx^{\mu-1}dx-\alpha\gamma\mu\int_{0}^{+\infty}f^{2}x^{\mu-1}dx+2\beta \gamma \int_0^{+\infty} f' f x^{\mu-1} dx.
\end{align*}
To cancel out the only remaining cross-term, we choose $\beta=\mu$.
Furthermore, we shall also specify the choice $\gamma=(\mu-b)\alpha$, where $b=\frac{\mu - \sqrt{\mu^2 - 4\varepsilon}}{2}$. 
This leads to
\begin{equation}\label{eq8}
\begin{aligned}
&\int_0^{+\infty} \big(x f'' + \alpha x f' + \beta f' + \gamma f\big)^2 x^{\mu-1} dx\\&\qquad=\alpha^{2}\int_{0}^{+\infty}\left(x^{2}(f')^{2}-b(\mu-b)f^{2}\right)x^{\mu-1}dx\\
&\qquad\qquad-\alpha(\mu+1-2b)\int_{0}^{+\infty}(f')^{2}x^{\mu}dx+\int_{0}^{+\infty}(f'')^{2}x^{\mu+1}dx\\
&\qquad=\alpha^{2}\int_{0}^{+\infty}\left(x^{2}(f')^{2}-\varepsilon f^{2}\right)x^{\mu-1}dx\\
&\qquad\qquad-\alpha\left(\sqrt{\mu^{2}-4\varepsilon}+1\right)\int_{0}^{+\infty}(f')^{2}x^{\mu}dx+\int_{0}^{+\infty}(f'')^{2}x^{\mu+1}dx=:g(\alpha).
\end{aligned}
\end{equation}
From the non-negativity of the above quadratic function $g: \alpha\mapsto g(\alpha)$, we deduce that
$$
\begin{aligned}
&4\left(\int_{0}^{+\infty}\left(x^{2}(f')^{2}-\varepsilon f^{2}\right)x^{\mu-1}dx\right)\left(\int_{0}^{+\infty}(f'')^{2}x^{\mu+1}dx\right)\\
&\qquad\qquad\ge\left(\sqrt{\mu^{2}-4\varepsilon}+1\right)^{2}\int_{0}^{+\infty}(f')^{2}x^{\mu}dx.
\end{aligned}
$$
This together with the equality in \eqref{eq8} for $\alpha=1$ proves the theorem.
\end{proof}

\begin{remark}
Note that \eqref{eq8} for $b=\frac{\mu + \sqrt{\mu^2 - 4\varepsilon}}{2}$ leads to a weaker version of \eqref{eq2}.
\end{remark}

\begin{proof}[Proof of Corollary \ref{corollary3}]	
	The equality in \eqref{eq2} is attained if and only if the quadratic function $g$ in \eqref{eq8} has only one real root, denoted by $\lambda$, namely,
	$$
	\lambda=\frac{\sqrt{\mu^{2}-4\varepsilon}+1}{2}\frac{\int_{0}^{+\infty}(f')^{2}x^{\mu}dx}{\int_{0}^{+\infty}\left(x^{2}(f')^{2}-\varepsilon f^{2}\right)x^{\mu-1}dx}.
	$$
	Note that by the weighted Hardy inequality, $\lambda\geq0$, and we can assume $\lambda>0$ otherwise $f$ is constant, which is not in $\bar{\mathcal{E}}_\mu(\mathbb{R}_+)$ and is excluded from the optimisation problem.
	Suppose $f\in \bar{\mathcal{E}}_\mu(\mathbb{R}_+)$ satisfies the equality in \eqref{eq2}, then we see from \eqref{eq8} that 
	\begin{equation}\label{eq9}
	xf'' + \lambda xf' +\mu f'+(\mu-b)\lambda f=0.
	\end{equation}

Now, to solve \eqref{eq9} we introduce
	$$
	w_\lambda( x):=w(\lambda x):=e^{\lambda x}f(x).
	$$
Then the ODE \eqref{eq9} is transformed into
\begin{align*}
0&=x(e^{-\lambda x}w_\lambda( x))''+(\lambda x+\mu) (e^{-\lambda x}w_\lambda( x))'+(\mu-b)\lambda e^{-\lambda x}w_\lambda( x)\\
&=x(\lambda^{2}e^{-\lambda x}w(\lambda x)-2\lambda^{2}e^{-\lambda x}w'(\lambda x)+\lambda^{2}e^{-\lambda x}w''(\lambda x))\\
&\quad+(\lambda x+\mu)(-\lambda e^{-\lambda x}w(\lambda x)+\lambda e^{-\lambda x}w'(\lambda x))+(\mu-b)\lambda e^{-\lambda x}w(\lambda x)\\
&=\lambda e^{-\lambda x}\left(\lambda xw''(\lambda x)+(\mu-\lambda x)w'(\lambda x)- bw(\lambda x)\right).
\end{align*}
Letting $t=\lambda x$, we get Kummer's (second order) ODE
$$
tw''(t)+(\mu-t)w'(t)-bw(t)=0,
$$
which is known to have two independent solutions (see for example Viola \cite{MR3586206}):
	$$
	\varphi(t) = {}_1F_1(b, \mu, t) \quad \text{and} \quad \psi(t) = t^{1-\mu} {}_1F_1(b+1-\mu, 2-\mu, t).
	$$
        Recall that ${}_1F_1(b, \mu, z)$ is entire in $b$ and $z$, and meromorphic in $\mu$ with poles $\mu=0, -1, -2, \cdots$.
        Noticing that when $\mu>0$, the function \(e^{-t}\varphi: t\mapsto e^{-t}\varphi(t)\) belongs to \( \bar{\mathcal{E}}_\mu(\mathbb{R}_+)\). However, 
	$$
	\int_{0}^{+\infty}\left(e^{-t}\psi''(t)\right)^{2}t^{\mu+1}dt=+\infty,
	$$ 
	since $(\psi''(t))^{2}=O(t^{-2-2\mu})$ as $t\rightarrow 0$.
         In the case $\mu<0$,
	$$
	\int_{0}^{+\infty}\left(e^{-t}\varphi(t)\right)^{2}t^{\mu-1}dt=+\infty,
	$$
	thus $e^{-t}\varphi \notin \bar{\mathcal{E}}_\mu(\mathbb{R}_+)$. Noticing that  
	$$(\psi''(t))^{2}t^{\mu+1}=O(t^{-1-\mu}), \quad (\psi'(t))^{2}t^{\mu-1}=O(t^{-1-\mu}), \quad (\psi(t))^{2}t^{\mu-1}=O(t^{1-\mu})$$ as $t\rightarrow 0$, we can deduce that 
	when $\mu<0$, $e^{-t}\psi\in \bar{\mathcal{E}}_\mu(\mathbb{R}_+)$.
	
	Tracking the above discussions back to $x$ variable, we proved the corollary.
	\end{proof}

\bigskip

\section*{\textbf{Compliance with ethical standards}}

\bigskip

\textbf{Conflict of interest} The authors have no known competing financial interests
or personal relationships that could have appeared to influence this reported work.

\bigskip

\textbf{Availability of data and material} Not applicable.

\bigskip

\bibliographystyle{alpha}
\bibliography{HuaY-OzaT-TonX-RadSUP}

\end{document}